\def\timestamp{%
Time-stamp: <P-domination-two.tex: Tuesday 12-01-2016 at 13:02:50 (cet)>}
\def\stripname Time-stamp: <#1 #2>{#2}
\edef\filedate{\expandafter\stripname\timestamp}
\DeclareMathSymbol\restr\mathbin{AMSa}{"16}  
\DeclareMathSymbol\PP\mathord{AMSb}{`P}
\DeclareMathSymbol\le\mathrel{AMSa}{"36}
\DeclareMathSymbol\ge\mathrel{AMSa}{"3E} 
\DeclareMathSymbol\emptyset\mathord{AMSb}{"3F}
\newcommand\bee{\mathfrak{b}}
\newcommand\cee{\mathfrak{c}}
\newcommand\dee{\mathfrak{d}}
\newcommand\Fn{\operatorname{Fn}}
\newcommand\dom{\operatorname{dom}}
\newcommand\calK{\mathcal{K}}
\newcommand\MA{\mathsf{MA}}
\newcommand\ourprod{2^{\omega_1}}
\newcommand\ourtree{2^{<\omega_1}}
\newcommand\preim{^\gets}
\newcommand\emptyseq{\langle\,\rangle}
\newcommand\omegaseq[2][n]{\langle{#2}_#1:#1\in\omega\rangle}
\newcommand\deltaomegaone{{\omega_1\setminus\delta}}
\newcommand\omegaomegaone{{\omega_1\setminus\omega}}
\theoremstyle{plain}
\newtheorem{theorem}{Theorem}
\newtheorem{lemma}[theorem]{Lemma}
\theoremstyle{remark}
\newtheorem{remark}[theorem]{Remark}
\newcommand\orpr[2]{\langle{#1},{#2}\rangle}
\begin{document}

\title[Compact spaces with a $\PP$-diagonal]%
      {Compact spaces with a $\PP$-diagonal}

\author{Alan Dow}
\address{
Department of Mathematics\\
UNC-Charlotte\\
9201 University City Blvd. \\
Charlotte, NC 28223-0001}
\email{adow@uncc.edu}
\urladdr{http://www.math.uncc.edu/\~{}adow}

\author{Klaas Pieter Hart}
\address{Faculty of Electrical Engineering, Mathematics and Computer Science\\
         TU Delft\\
         Postbus 5031\\
         2600~GA {} Delft\\
         the Netherlands}
\email{k.p.hart@tudelft.nl}
\urladdr{http://fa.its.tudelft.nl/\~{}hart}

\keywords{compact space, Cantor cube, $\bee$, $\dee$, diagonal, $\PP$-diagonal,
          $\PP$-dominated space, metrizability}

\subjclass{Primary: 54E35.
           Secondary: 03E17, 54D30, 54B10, }

\begin{abstract}
  We prove that compact Hausdorff spaces with a $\PP$-diagonal
  are metrizable.
  This answers problem~4.1 (and the equivalent problem~4.12)
  from~\cite{MR2739891}. 
\end{abstract}

\date{\filedate}
\maketitle

\section*{Introduction}

The purpose of this note is to show that a compact space with a $\PP$-diagonal
is metrizable.

To explain the meaning of this statement we need to introduce a bit of notation
and define a few notions.
For a space~$M$ (always assumed to be at least completely regular) we
let $\calK(M)$ denote the family of compact subsets of~$M$.
Following~\cite{MR2150789} we say that a space~$X$ is \emph{$M$-dominated}
if there is a cover $\{C_K:K\in\calK(M)\}$ of~$X$ by compact subsets
with the property that $K\subseteq L$ implies $C_K\subseteq C_L$.

In the case that we deal with, namely where $M$~is the space of irrational
numbers, we can simplify the cover a bit and make it more amenable
to combinatorial treatment.
The space of irrationals is homeomorphic to the product space $\omega^\omega$,
where $\omega$~carries the discrete topology.
We shall reserve the letter~$\PP$ for this space.

The set~$\PP$ is ordered coordinatewise: $f\le g$ means
$(\forall n)(f(n)\le g(n))$.
Using this order we simplify the formulation of $\PP$-dominated as follows.
If $K$~is a compact subset of~$\PP$ then the function~$f_K$, given
by $f_K(n)=\max\{g(n):g\in K\}$, is well-defined.
Using this one can easily verify that a space~$X$ is $\PP$-dominated
iff there is a cover $\langle K_f:f\in\PP\rangle$ of~$X$ by compact sets
such that $f\le g$ implies $K_f\subseteq K_g$.
We shall call such a cover \emph{an order-preserving cover by compact sets}.

Finally then we say that a space~$X$ has a $\PP$-diagonal if the complement
of the diagonal, $\Delta$, in~$X^2$ is $\PP$-dominated.
Problem~4.1 from~\cite{MR2739891} asks whether a compact space with
a $\PP$-diagonal is metrizable.
The authors of that paper proved that the answer is positive if $X$~is
assumed to have countable tightness, or in general if $\MA(\aleph_1)$
is assumed.
The latter proof used that assumption to show that $X$~has a small diagonal,
which in turn implies that $X$~has countable tightness so that the first
result applies.
Thus, Problem~4.12 from~\cite{MR2739891}, which asks if a compact
space with a $\PP$-diagonal has a small diagonal, is a natural reformulation
of Problem~4.1.

The property of $\PP$-domination arose in the study of the geometry of
topological vector space; in \cite{MR895307} it was shown that if a locally
convex space has a form of $\PP$-domination then its compact sets
are metrizable.
The paper~\cite{MR2739891} contains more information and results
leading up to its Problem~4.1.

The main result of~\cite{MR3338973} states that compact spaces with
a $\PP$-diagonal are metrizable under the assumption of the Continuum
Hypothesis.
The proof establishes that a compact space with a $\PP$-diagonal that
has \emph{uncountable} tightness maps onto the Tyhconoff
cube~$[0,1]^{\omega_1}$ and no compact space with a $\PP$-diagonal
maps onto the cube~$[0,1]^\cee$.

The principal result of this paper closes the gap between $\aleph_1$
and $\cee$ by establishing that no compact space with a $\PP$-diagonal
maps onto~$[0,1]^{\omega_1}$.

Furthermore we would like to point out that Lemma~\ref{lemma.KfBIG}
establishes a Baire category type property of~$\ourprod$:
in an order-preserving cover by compact sets there
are many members with non-empty interior in the $G_\delta$-topology.

\section*{Some preliminaries}

In the proof of the main lemma, Lemma~\ref{lemma.KfBIG},
we need to consider three cases, depending on the values of the
familiar cardinals~$\bee$ and~$\dee$.
These are defined in terms of the mod~finite order on~$\PP$:
we say $f\le^* g$ if $\{n:g(n)<f(n)\}$ is finite.
Then $\bee$~is the minimum size of a subset of~$\PP$ that is unbounded
with respect to~$\le^*$, and $\dee$~is the minimum size of a dominating
(i.e., cofinal) set with respect to~$\le^*$.
Interestingly, $\dee$~is also the minimum size of a dominating set with respect
to the coordinatewise order~$\le$; we shall use this in the proof of
the main lemma.
We refer to Van~Douwen's~\cite{MR776622} for more information.

Since we shall be working with the Cantor cube $\ourprod$ we fix
a bit of notation.
If $I$ is some subset of~$\omega_1$ then $\Fn(I,2)$ denotes the set
of finite partial functions from~$I$ to~$2$.
We let $\ourtree$ denote the binary tree of countable sequences of zeros
and ones. 
If $s\in\Fn(\omega_1,2)$ then $[s]$ denotes $\{x\in\ourprod:s\subseteq x\}$;
the family $\{[s]:s\in \Fn(\omega_1,2)\}$ is the standard base for the
product topology of~$\ourprod$.
Similarly, if $\rho\in\ourtree$
then $[\rho]=\{x\in\ourprod:\rho\subseteq x\}$, and the family
$\{[\rho]:\rho\in\ourtree\}$ is the standard base for what is called
the $G_\delta$-topology on~$\ourprod$; a set dense with respect to this
topology will be called $G_\delta$-dense.

When working with powers of the form~$I^{\omega_1}$, where $I=\omega$ or $I=2$,
we use $\pi_\delta$ to denote the projection of~$I^{\omega_1}$
onto~$I^\deltaomegaone$.

In the proof of Lemma~\ref{lemma.KfBIG} we shall need the following
result, due to Todor\v{c}evi\'c.

\begin{lemma}[\cite{MR980949}*{Theorem~1.3}]  \label{lemma.stevo}
  If\/ $\bee=\aleph_1$ then $\omega^{\omega_1}$ has a subset, $X$, of
  cardinality~$\aleph_1$ such that for every $A\in[X]^{\aleph_1}$
  there are $D\in[A]^{\aleph_0}$ and $\delta\in\omega_1$ such that
  $\pi_\delta[D]=\{d\restr(\deltaomegaone):d\in D\}$ is dense
  in~$\omega^\deltaomegaone$.\qed  
\end{lemma}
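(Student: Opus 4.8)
The plan is to build $X$ explicitly from an unbounded, $\le^*$-increasing $\omega_1$-sequence and then to read off the sets $D$ from a countable elementary submodel. \emph{The construction.} Since $\bee=\aleph_1$ there is a $\le^*$-increasing, $\le^*$-unbounded sequence $\langle f_\xi:\xi<\omega_1\rangle$ in $\omega^\omega$; passing to a subsequence we may assume each $f_\xi$ is strictly increasing and that $f_\xi(n)<f_\eta(n)$ for all but finitely many $n$ whenever $\xi<\eta$. Fix also, for each infinite $\gamma<\omega_1$, an enumeration $e_\gamma\colon\omega\to\gamma$, chosen \emph{coherently} --- for instance derived from the minimal walks along a fixed $C$-sequence, so that $e_\gamma\restriction\beta$ and $e_\beta\restriction\beta$ differ only finitely whenever $\beta<\gamma$. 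One then sets $X=\{x_\alpha:\alpha<\omega_1\}$, where $x_\alpha(\gamma)=0$ for $\gamma\le\alpha$ while, for $\gamma>\alpha$, the value $x_\alpha(\gamma)$ is built from $f_\alpha$, from the position $e_\gamma^{-1}(\alpha)$ of $\alpha$ inside $\gamma$, and from the walk from $\gamma$ down to $\alpha$. The naive choice $x_\alpha(\gamma)=f_\alpha\bigl(e_\gamma^{-1}(\alpha)\bigr)$ is only a first approximation: it broadcasts all of $f_\alpha$ to the coordinates above $\alpha$, but by coherence it degenerates to a constant on tails of coordinates, so it must be refined, using the extra walk data, so that the map $\gamma\mapsto x_\alpha(\gamma)$ stays rich while the family remains coherent.

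\emph{Extracting $D$ and $\delta$.} Let $A\in[X]^{\aleph_1}$ be given, say $A=\{x_\alpha:\alpha\in S\}$ with $S\in[\omega_1]^{\aleph_1}$; note $S$ is unbounded in $\omega_1$. Choose a countable $M\prec H(\theta)$ with $X,\langle f_\xi\rangle,\langle e_\gamma\rangle,S\in M$ and with $\delta:=M\cap\omega_1$ a limit point of $S$, and put $D=\{x_\alpha:\alpha\in S\cap\delta\}$, a countable subset of $A$. We check that $\pi_\delta[D]$ is dense in $\omega^{\omega_1\setminus\delta}$, i.e.\ that every $s\in\Fn(\omega_1\setminus\delta,\omega)$ is extended by some $x_\alpha$ with $\alpha\in S\cap\delta$. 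Fix such an $s$, with $\dom s=\{\gamma_1<\dots<\gamma_k\}$. Since $\delta$ is closed under the relevant $C$-sequence operations, for every $\gamma_i\ge\delta$ the walk from $\gamma_i$ to any $\alpha<\delta$ passes through $\delta$, so for all sufficiently large $\alpha<\delta$ the demand ``$x_\alpha\restriction\dom s=s$'' is equivalent to a demand on $f_\alpha$ alone --- of the shape ``$f_\alpha$ assumes prescribed values in prescribed finite configurations'', where everything is computed inside $M$ from $e_\delta\restriction\delta$ and from the (finitely many, finite) walks $\gamma_i\mapsto\delta$. Because $\langle f_\xi\rangle$ is $\le^*$-unbounded --- and provided the construction has been set up with enough care --- the set of $\alpha<\omega_1$ for which $f_\alpha$ meets this demand is large enough (a club, say, or a co-countable set) to meet every uncountable subset of $\omega_1$, in particular to meet $S$; and that statement has all its parameters in $M$, so by elementarity there is a witness $\alpha\in S\cap M=S\cap\delta$. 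Then $x_\alpha\in D$ extends $s$. As the only parameters in the argument were the finite set $\dom s$ and its values, the \emph{same} $D$ works for every $s$, and $\pi_\delta[D]$ is dense.

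\emph{Where the difficulty lies.} Everything rests on the construction delivering two conflicting properties at once: \emph{coherence}, so that for $\gamma\ge\delta$ the value $x_\alpha(\gamma)$ depends on $\gamma$ only through data that has already stabilised inside $M$; and \emph{richness}, so that the finite patterns one is then asked to realise with a single $f_\alpha$ are realised by enough ordinals $\alpha$ to be caught by an arbitrary uncountable $S$. Arranging both simultaneously --- a coherent coding of a $\le^*$-unbounded $\omega_1$-sequence into $\omega^{\omega_1}$ whose traces on tails of coordinates remain abundant --- is the real content of the lemma, and is where Todor\v{c}evi\'c's machinery of minimal walks and $\rho$-functions is brought to bear. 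The hypothesis $\bee=\aleph_1$ enters only through richness, and is essential there: it is precisely the unboundedness of the sequence that prevents an uncountable $A\subseteq X$ from being ``small on a tail'' of coordinates.
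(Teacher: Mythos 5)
This lemma is quoted in the paper from Todor\v{c}evi\'c's \emph{Partition problems in topology} (Theorem~1.3) and is not proved there, so there is no in-paper argument to compare yours with; the question is only whether your proposal actually proves the statement, and it does not. You say so yourself in three places: the ``naive choice'' $x_\alpha(\gamma)=f_\alpha(e_\gamma^{-1}(\alpha))$ ``must be refined, using the extra walk data''; the density argument works ``provided the construction has been set up with enough care''; and ``arranging both simultaneously \dots is the real content of the lemma.'' The set $X$ is never defined, so nothing afterwards can be checked. What you have written is an accurate description of the shape of a Todor\v{c}evi\'c-style argument (coherent $e_\gamma$'s plus an unbounded $\le^*$-increasing family, with an elementary submodel extracting $D$ and $\delta=M\cap\omega_1$), but the two load-bearing steps are exactly the ones left blank.

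Beyond the missing construction, the one concrete claim you do make in the extraction step is wrong as stated. You assert that the set of $\alpha$ for which $f_\alpha$ ``assumes prescribed values in prescribed finite configurations'' is a club or co-countable. Unboundedness of $\langle f_\xi\rangle$ gives nothing of the sort: all the $f_\xi$ could, for instance, omit the value $0$ entirely, so a demand ``$f_\alpha(n)=0$'' is met by no $\alpha$ at all. The conclusion of the lemma is correspondingly weaker and existential --- for each uncountable $A$ one finds \emph{some} $\delta$ and \emph{some} countable $D\subseteq A$ whose projection is dense --- not that a fixed club of indices realises every finite pattern. There is also a parameter problem in the elementarity step: the function $s\in\Fn(\omega_1\setminus\delta,\omega)$ has its domain above $\delta=M\cap\omega_1$, hence $s\notin M$, and the claim that the induced demand on $f_\alpha$ ``has all its parameters in $M$'' after translating through coherence is precisely the part of the argument that would need the unspecified refinement of the construction to be carried out. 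As it stands the proposal is a plan with the theorem's content still to be supplied; if you want a self-contained treatment you should either reproduce Todor\v{c}evi\'c's construction in detail or, as the paper does, cite it.
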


Theorem~1.3 of \cite{MR980949} is actually formulated as a theorem
about~$\bee$: drop the assumption $\bee=\aleph_1$ and replace every~$\omega_1$
and~$\aleph_1$ by~$\bee$.
As explained in~\cite{MR980949} this shows that there are natural versions
of the S-space problem that do have ZFC solutions.

The lemma also holds with $\omega$ replaced by~$2$, simply map
$\omega^{\omega_1}$ onto~$\ourprod$ by taking all coordinates modulo~$2$.
In that case the density of~$\pi_\delta[D]$ can be expressed
by saying that for every $s\in\Fn(\deltaomegaone,2)$
the intersection~$D\cap[s]$ is nonempty.

\section*{BIG sets in $\ourprod$}

Let us call a subset, $Y$, of $\ourprod$ BIG if it is compact
and projects onto some final product, that is, there is a
$\delta\in\omega_1$ such that $\pi_\delta[Y]=2^\deltaomegaone$.
The latter condition can be expressed without mentioning projections as
follows: there is a $\delta\in\omega_1$ such that for every
$s\in\Fn(\deltaomegaone,2)$ the intersection $Y\cap[s]$ is
nonempty (and a dense set that is closed is equal to the whole space).

BIG sets are also big combinatorially, in the following sense.

\begin{lemma}\label{lemma.BIGisbig}
  If $Y$ is a BIG subset of $\ourprod$ then there is a node~$\rho$
  in the tree $\ourtree$ such that $[\rho]\subseteq Y$.
\end{lemma}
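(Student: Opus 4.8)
The plan is to build the node~$\rho$ by first fixing a countable ordinal $\alpha\ge\delta$ together with a suitable choice of values for the coordinates in $\alpha\setminus\delta$, and then choosing the values on~$\delta$ to fit. Fix $\delta<\omega_1$ with $\pi_\delta[Y]=2^{\deltaomegaone}$. For a function~$v$ whose domain is a set of the form $\alpha\setminus\delta$, with $\delta\le\alpha<\omega_1$, put
\[C_v=\bigl\{x\restr\delta : x\in Y \text{ and } x\restr(\alpha\setminus\delta)=v\bigr\}.\]
This is a closed subset of~$2^\delta$, being the continuous image, under $x\mapsto x\restr\delta$, of the compact set $Y\cap\{x\in\ourprod:x\restr(\alpha\setminus\delta)=v\}$. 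Two facts are immediate. First, $C_v\ne\emptyset$ for every such~$v$: any $q\in2^{\deltaomegaone}$ that extends~$v$ belongs to $\pi_\delta[Y]$, so $q=\pi_\delta(x)$ for some $x\in Y$, and then $x\restr\delta\in C_v$. Second, if $w$ extends~$v$ to a longer interval then $C_w\subseteq C_v$.

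The heart of the argument is to locate a \emph{stable} $v$, meaning one with $C_w=C_v$ for every~$w$ extending~$v$. If no stable~$v$ existed, I would recursively construct an increasing chain $\langle v_\xi:\xi<\omega_1\rangle$ of such functions: let $v_0$ be the empty function (so $\alpha=\delta$); given~$v_\xi$, which by assumption is not stable, choose $v_{\xi+1}$ extending~$v_\xi$ with $C_{v_{\xi+1}}$ a proper subset of~$C_{v_\xi}$; at limit stages set $v_\lambda=\bigcup_{\xi<\lambda}v_\xi$, whose domain is again of the required form. Then $\langle C_{v_\xi}:\xi<\omega_1\rangle$ strictly decreases, which is impossible: since~$2^\delta$ is second countable, choosing for each~$\xi$ a basic open set that meets~$C_{v_\xi}$ but misses the closed set~$C_{v_{\xi+1}}$ defines an injection of~$\omega_1$ into a countable base (if $\xi<\eta$ gave the same basic open~$B$, then $B$ meets $C_{v_\eta}\subseteq C_{v_{\xi+1}}$, contradicting $B\cap C_{v_{\xi+1}}=\emptyset$). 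This ``$\omega_1$ versus~$\omega$'' count is the only place the uncountability of~$\omega_1$ enters, and it is the step I expect to need the most care to phrase cleanly.

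It remains to verify that a stable~$v$, say with domain $\alpha\setminus\delta$, does what we want. Pick any $p\in C_v$ and let $\rho=p\cup v$, a function on the countable ordinal~$\alpha$, i.e.\ a node of~$\ourtree$; I claim $[\rho]\subseteq Y$. Fix $r\in2^{\omega_1\setminus\alpha}$ and set $y=\rho\cup r\in\ourprod$. For each $\beta$ with $\alpha\le\beta<\omega_1$ the function $v\cup\bigl(r\restr(\beta\setminus\alpha)\bigr)$ extends~$v$, so stability gives $p\in C_{v\cup(r\restr(\beta\setminus\alpha))}$, which is precisely the statement that $\{x\in Y:x\restr\beta=y\restr\beta\}\ne\emptyset$. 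These sets form a decreasing family of nonempty closed subsets of the compact space~$Y$, so their intersection is nonempty; but as $\beta$ ranges over all of $\omega_1\setminus\alpha$ the restrictions $y\restr\beta$ determine~$y$, so that intersection equals $Y\cap\{y\}$, whence $y\in Y$. Since $r$ was arbitrary, $[\rho]\subseteq Y$. Beyond this last routine compactness argument, the only genuine idea is to single out ``stability of the fibre projection~$C_v$'' as the relevant property and to observe that second countability of~$2^\delta$ forces a stable~$v$ to exist.
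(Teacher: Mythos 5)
Your proof is correct, and it takes a genuinely different route from the paper's. The paper reindexes so that the witnessing ordinal is $\omega$ and runs an $\omega$-step fusion: it builds increasing conditions $t_n\in2^{<\omega}$ and $s_n\in\Fn(\omega_1\setminus\omega,2)$ with $[s_n]\subseteq\pi_\delta[Y\cap B_{t_n}]$, the key step being a Baire-category observation that, since $\pi_\delta[Y_{t_n}]=\pi_\delta[Y_{t_n*0}]\cup\pi_\delta[Y_{t_n*1}]$ covers $[s_n]$ and both pieces are closed, one of them has nonempty interior there; the node $\rho$ is the concatenation of the two limits, and a short convergence argument shows $[\rho]\subseteq Y$. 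You instead work above an arbitrary witnessing $\delta$, introduce the trace sets $C_v\subseteq2^\delta$, and extract a stable $v$ from the fact that a strictly decreasing $\omega_1$-chain of closed subsets of the second countable space $2^\delta$ is impossible; the closing compactness argument (the decreasing family $\{x\in Y:x\restr\beta=y\restr\beta\}$ for $\beta\ge\alpha$) then plays the role of the paper's convergent sequence. All the steps check out: $C_v$ is closed as a continuous image of a compact set, nonemptiness follows from $\pi_\delta[Y]=2^{\omega_1\setminus\delta}$, the injection of $\omega_1$ into a countable base is the standard hereditary-Lindelöf count, and stability plus compactness does force $\rho\cup r\in Y$ for every $r$. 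What each approach buys: the paper's argument stays entirely within countably many steps and needs only the elementary ``one of two closed pieces has interior'' fact, at the cost of reindexing and some bookkeeping to make the domains cohere; yours replaces that with a (hypothetical) transfinite recursion and yields the slightly stronger structural conclusion that $[p\cup v]\subseteq Y$ for \emph{every} $p\in C_v$, i.e.\ that the whole stabilized trace consists of good nodes.
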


\begin{proof}
  Let $Y$ be BIG and fix a $\delta$ witnessing this.
  After reindexing we can assume $\delta=\omega$ and we
  let $B_t=\{x\in\ourprod:t\subset x\}$ and $Y_t=Y\cap B_t$ for $t\in2^{<\omega}$.

  Starting from $t_0=\emptyseq$ and $s_0=\emptyset$ we build
  a sequence $\omegaseq{t}$ in~$2^{<\omega}$
  and a sequence $\omegaseq{s}$
  in~$\Fn(\omegaomegaone,\omega)$ such that
  $[s_n]\subseteq\pi_\delta[Y_{t_n}]$ for all~$n$.

  Given $t_n$ we can choose $i_n<2$, and set $t_{n+1}=t_n*i_n$,
  such that $[s_n]\cap \pi_\delta[Y_{t_{n+1}}]$ has nonempty interior.
  Then choose an extension~$s_{n+1}$ of~$s_n$ such that
  $[s_{n+1}]\subseteq \pi_\delta[Y_{t_{n+1}}]$.
  With a bit of bookkeeping one can ensure that $\bigcup_n\dom s_n$
  is an initial segment of~$\omegaomegaone$.
  We let $\rho$ be the concatenation of~$\bigcup_nt_n$ and~$\bigcup_ns_n$.

  To see that $\rho$ is as required let $x\in[\rho]$.
  By construction we have $x\in[s_n]$ for all~$n$, so that, again for all~$n$,
  there is $y_n\in Y_{t_n}$ such that $y_n$ and $x$ agree above $\dom\rho$.
  If $s\in\Fn(\omega_1,2)$ determines a basic neighbourhood of~$x$ then
  there is an~$m$ such that $\dom s\cap\dom\rho$ is a subset
  of~$\dom t_m\cup\dom s_m$.
  Then $y_n\in[s]$ for all~$n\ge m$, so that the sequence $\omegaseq{y}$
  converges to~$x$, which shows that $x\in Y$.
\end{proof}

\section*{Existence of BIG sets}

It is clear that a compact space is $\PP$-dominated:
simply let $K_f$ be the whole space for all~$f$.
However, in our proof we shall encounter $\PP$-dominating covers that may
consist of proper subsets.
Our next result shows that such a cover of~$\ourprod$ by compact sets
must contain a BIG subset. 

\begin{lemma}
  \label{lemma.KfBIG}
  If $\langle K_f:f\in\PP\rangle$ is an order-preserving cover
  of~$\ourprod$ by compact sets then there is an~$f$ such that
  $K_f$~is BIG.  
\end{lemma}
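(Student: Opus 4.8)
The plan is to reformulate the conclusion in terms of the $G_\delta$-topology and then split into three cases according to the values of~$\bee$ and~$\dee$. First, observe that for a \emph{compact} subset~$Y$ of~$\ourprod$ being BIG is equivalent to having nonempty interior in the $G_\delta$-topology: if $[\rho]\subseteq Y$ for some $\rho\in\ourtree$ with $\dom\rho=\delta$, then $\pi_\delta[Y]\supseteq\pi_\delta[[\rho]]=2^\deltaomegaone$, so $Y$ is BIG, while the converse is exactly Lemma~\ref{lemma.BIGisbig}. Dually, a compact $Y$ fails to be BIG precisely when for every countable ordinal~$\delta$ there is an $s\in\Fn(\deltaomegaone,2)$ with $Y\cap[s]=\emptyset$. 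Thus the lemma says that in an order-preserving compact cover of~$\ourprod$ not every member can be ``small at every level''; this is a Baire-category-type statement about $\cee$ many sets, which is why it is natural to split on the cardinals governing how many members of the cover one really has to fight.

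\textbf{Case $\dee=\aleph_1$.} Using that $\dee$ is also the least size of a family dominating in the coordinatewise order~$\le$, fix such a family $\langle f_\alpha:\alpha<\omega_1\rangle$; it is still a cover, since any $x\in\ourprod$ lies in some~$K_h$ and $h\le f_\alpha$ for some~$\alpha$, whence $x\in K_{f_\alpha}$. If some~$K_{f_\alpha}$ is BIG we are done, so assume not. Then build a point outside every~$K_{f_\alpha}$ by recursion on~$\alpha$: at stage~$\alpha$ one has committed~$x$ on a countable subset of a countable ordinal~$\gamma_\alpha$; since $K_{f_\alpha}$ is not BIG there is $s_\alpha\in\Fn(\omega_1\setminus\gamma_\alpha,2)$ with $K_{f_\alpha}\cap[s_\alpha]=\emptyset$, and one extends~$x$ by~$s_\alpha$ (consistent, as $\dom s_\alpha\subseteq\omega_1\setminus\gamma_\alpha$), setting $\gamma_{\alpha+1}>\sup\dom s_\alpha$ and taking unions at limits. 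After $\omega_1$ steps, filling the unused coordinates with zeros, one obtains $x\in\ourprod$ with $x\in[s_\alpha]$, hence $x\notin K_{f_\alpha}$, for every~$\alpha$, contradicting that the family covers~$\ourprod$.

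\textbf{Case $\bee=\aleph_1<\dee$.} Here I invoke Todor\v{c}evi\'c's Lemma~\ref{lemma.stevo} in its $2$-valued form and fix the set $X=\{x_\alpha:\alpha<\omega_1\}\subseteq\ourprod$ that it provides. The payoff step is clean: if some $A\in[X]^{\aleph_1}$ is contained in a single~$K_g$, then Lemma~\ref{lemma.stevo} yields a countable $D\subseteq A$ and a~$\delta$ with $\pi_\delta[D]$ dense in~$2^\deltaomegaone$; since $D\subseteq K_g$ and $K_g$ is closed, $\pi_\delta[K_g]$ is closed and dense, so $\pi_\delta[K_g]=2^\deltaomegaone$ and $K_g$ is BIG. (Note also that if $\pi_\delta[K_g]$ merely had nonempty interior, absorbing the finite support of a witnessing $s$ into a larger countable $\delta$ would still make $K_g$ BIG, so Baire category in~$2^\deltaomegaone$ can be used in place of exact density.) Assuming no~$K_f$ is BIG, every~$K_g$ meets~$X$ in a countable set, and the remaining combinatorial work is to derive a contradiction from this by analysing the family of witnesses $g_\alpha$ (with $x_\alpha\in K_{g_\alpha}$) --- which, because $\bee=\aleph_1$, need not be $\le^*$-bounded --- re-choosing witnesses, passing to a $\Delta$-system, and applying Lemma~\ref{lemma.stevo} to suitably chosen uncountable subsets of~$X$.

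\textbf{Case $\aleph_1<\bee$} (so also $\aleph_1<\dee$). Now every subfamily of~$\PP$ of size~$\aleph_1$ is $\le^*$-bounded. Since $\ourprod$ is separable there is a countable $D\subseteq\ourprod$ with $\pi_\delta[D]$ dense in~$2^\deltaomegaone$ for every countable~$\delta$; covering~$D$ by countably many~$K_g$'s and exploiting boundedness together with a Baire-category argument in~$\ourprod$, one aims to land inside a single member~$K_f$ with nonempty $G_\delta$-interior, hence BIG. The obstacle, both here and in the previous case, is precisely $\dee>\aleph_1$: the direct diagonalisation of the first case is unavailable because a dominating family cannot be listed in~$\omega_1$ steps, so the coordinatewise order-preserving structure must instead be used to force a single compact member of the cover to project onto a tail of~$\ourprod$, using the Todor\v{c}evi\'c set when $\bee=\aleph_1$ and the boundedness of $\aleph_1$-sized families when $\bee>\aleph_1$. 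In all three cases some~$K_f$ is BIG, which proves the lemma.
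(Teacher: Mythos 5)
Your reformulation of BIGness as nonempty $G_\delta$-interior, your three-way case split on $\bee$ and $\dee$, and your first case are all exactly right: for $\dee=\aleph_1$ your diagonalisation against a $\le$-dominating family of size $\aleph_1$ is a correct and complete argument, essentially the paper's (the paper phrases the recursion through nowhere-density of $K_{f_\alpha}\cap[\rho]$, you phrase it through failure of BIGness; both work). The difficulty is that your other two cases are announcements of intent rather than proofs, and in each the sketch points away from the argument that actually closes the case. For $\bee=\aleph_1<\dee$ you propose to assume no uncountable subset of the Todor\v{c}evi\'c set $X$ lies in a single $K_g$ and to refute this by re-choosing witnesses and passing to a $\Delta$-system; but nothing in that plan uses $\dee>\aleph_1$, and the paper never locates an uncountable subset of $X$ inside one $K_g$. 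Instead it works with the unions $K(t)=\bigcup\{K_f:t\subseteq f\}$ for $t\in\omega^{<\omega}$, recursively chooses $t_n$, uncountable $A_n\subseteq X\cap K(t_n)$, and countable $D_n\subseteq A_n$ with $\pi_{\delta_n}[D_n]$ dense (using $K(t)=\bigcup_kK(t*k)$ to refine), and then --- this is where $\dee>\aleph_1$ enters --- finds a single $g\in\omega^\omega$ with $g\not\le^*h_s$ for all of the $\aleph_1$ many functions $h_s(n)=\min\{m:d(n,m)\in[s]\}$. The countable set $E=\{d(n,m):m<g(n)\}$ is then dense above $\delta=\sup_n\delta_n$, and because each finite block $\{d(n,m):m<g(n)\}$ lies in some $K_{f_n}$ with $t_n\subseteq f_n$, the coordinatewise maximum $f$ of the $f_n$ is well defined and $E\subseteq K_f$. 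Some such mechanism for trapping a dense-above-$\delta$ set inside a \emph{single} $K_f$ is indispensable, and your sketch contains none.

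In the case $\bee>\aleph_1$ your plan --- cover a countable dense set by countably many $K_{g_d}$ and take a bound --- founders on the fact that the cover is order-preserving only for the everywhere-dominance order $\le$: a $\le^*$-bound $f$ of the $g_d$ (which exists with no hypothesis on $\bee$ at all) yields no inclusion $K_{g_d}\subseteq K_f$, so nothing forces the dense set into one member of the cover; moreover the family one really has to bound has size $\aleph_1$, not $\aleph_0$. The paper's argument is engineered precisely to defeat this obstacle: it builds $h\in\PP$ and $\rho\in\ourtree$ with $K(h\restr n)\cap[\rho]$ $G_\delta$-dense in $[\rho]$ for every $n$, chooses for each $s\in\Fn(\deltaomegaone,2)$ witnesses $h_{s,n}$ \emph{extending} $h\restr n$ with $K_{h_{s,n}}\cap[\rho]\cap[s]\neq\emptyset$, and applies $\bee>\aleph_1$ to the $\aleph_1$-sized family of the $h_s=\max_nh_{s,n}$ to get $f\ge h$ with $h_s\le^*f$ for all $s$; the finitely many coordinates where $f$ might fail to dominate some $h_{s,n}$ are exactly those where $h_{s,n}$ agrees with $h\le f$, so $K_f$ genuinely meets every $[\rho]\cap[s]$ and $[\rho]\subseteq K_f$. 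Neither this device nor any substitute for it appears in your proposal, so as written it proves the lemma only in the case $\dee=\aleph_1$.
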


\begin{proof}
  We consider three cases.

  First we assume $\dee=\aleph_1$.
  In this case we show outright that there are $\rho\in\ourtree$
  and $f\in\PP$ such that $[\rho]\subseteq K_f$.
  Let $\langle f_\alpha:\alpha\in\omega_1\rangle$ be a sequence
  that is $\le$-dominating.

  Working toward a contradiction we assume no~$\rho$ and~$f$, as desired,
  can be found.
  This implies that for every~$\rho$ and every~$f$ the intersection
  $K_f\cap[\rho]$ is nowhere dense in~$[\rho]$.
  Indeed, if such an intersection has interior then there is
  $s\in\Fn(\omega_1,2)$ such that $[s]\cap[\rho]$ is nonempty
  and contained in~$K_f$.
  It would then be an easy matter to find $\sigma\in\ourtree$
  that extends both~$\rho$ and~$s$, and then $[\sigma]\subseteq K_f$.

  This allows us to choose an increasing sequence
  $\langle\rho_\alpha:\alpha\in\omega_1\rangle$ in~$\ourtree$
  such that $[\rho_\alpha]\cap K_{f_\alpha}=\emptyset$ for all~$\alpha$.
  Then the point $x=\bigcup_\alpha\rho_\alpha$ does not belong to any~$K_f$
  because the $K_{f_\alpha}$ are cofinal in the whole family.

  \smallskip
  Next we assume $\dee>\bee=\aleph_1$.
  We apply $\bee=\aleph_1$ to find a special subset~$X$
  of~$\ourprod$ as in the comment after Lemma~\ref{lemma.stevo}.
  In what follows, when $t\in\omega^{<\omega}$ we let $K(t)$ denote
  the union $\bigcup\{K_f:t\subseteq f\}$.

  We choose an increasing sequence $\omegaseq{t}$
  in $\omega^{<\omega}$, together with, for each~$n$,
  an uncountable subset~$A_n$ of~$X$, a countable subset~$D_n$ of~$A_n$,
  and $\delta_n\in\omega_1$ such that
  $A_n\subseteq K(t_n)$ and
  for all $s\in\Fn({\omega_1\setminus\delta_n},2)$
  the intersection $D_n\cap[s]$ is nonempty.
  Simply use that $K(t)=\bigcup_k K(t*k)$ for all~$t$.

  Let $\delta=\sup_n\delta_n$ and enumerate each $D_n$
  as $\langle d(n,m):m\in\omega\rangle$.
  
  For each $s\in\Fn(\deltaomegaone,2)$ each $D_n$ intersects~$[s]$
  so that we can define $h_s\in\omega^\omega$ by
  $h_s(n)=\min\{m:d(n,m)\in[s]\}$.

  By $\dee>\aleph_1$ there is $g\in\omega^\omega$ such that
  $\{n:h_s(n)<g(n)\}$ is infinite for all~$s$.
  
  Now let $E=\{d(n,m):m<g(n), n\in\omega\}$ and observe that
  $E$~meets~$[s]$ for every $s\in\Fn(\deltaomegaone,2)$,
  so that $\pi_\delta[E]$ is dense in $2^\deltaomegaone$.

  For each $n$ there is $f_n\in\PP$ that extends~$t_n$ and is such that
  $\{d(n,m):m<g(n)\}$~is a subset of~$K_{f_n}$.
  As $f_m(n)=t_{n+1}(n)$ if $m>n$ we may define $f\in\PP$ by
  $f(n)=\max\{f_m(n):m\in\omega\}$ for all~$n$.
  Thus we find a single~$f$ such that $E\subseteq K_f$, which
  immediately implies that $K_f$~is BIG.

  \smallskip
  Our last case is when $\bee>\aleph_1$.
  We let $A$ be the set of members, $t$, of~$\omega^{<\omega}$ for which there
  is a $\rho\in\ourtree$ such that $K(t)\cap[\rho]$ is $G_\delta$-dense
  in~$[\rho]$.

  As $K(\emptyseq)=\ourprod$ we have $\emptyseq\in A$.

  We show that if $t\in A$, as witnessed by~$\rho$,
  then there is an~$m_t$ such that $t*n\in A$
  whenever $n\ge m_t$; as $K(t*m)\subseteq K(t*n)$ whenever $m\le n$
  it follows that we need to find just one~$n$ such that $t*n\in A$.
  Build, recursively, an increasing
  sequence $\rho=\rho_0\subseteq\rho_1\subseteq\rho_2\subseteq\cdots$
  in~$\ourtree$ such that $\rho_0=\rho$ and,
  if possible, $[\rho_{n+1}]\cap K(t*n)=\emptyset$; if such a~$\rho_{n+1}$
  cannot be found then $K(t*n)\cap[\rho_n]$ is $G_\delta$-dense in~$[\rho_n]$
  and we are done.
  So assume that the recursion does not stop and set $\varrho=\bigcup_n\rho_n$;
  then $[\varrho]$ is disjoint from $\bigcup_nK(t*n)$, which is equal
  to~$K(t)$.
  This would contradict $G_\delta$-density of~$K(t)$ in~$[\rho]$.

  We can define $h\in\PP$ recursively by $h(n)=m_{h\restr n}$, together
  with an increasing sequence $\omegaseq\rho$ in~$\ourtree$
  such that $K(h\restr n)\cap[\rho_n]$ is $G_\delta$-dense in~$[\rho_n]$.
  Let $\rho=\bigcup_n\rho_n$, then $K(h\restr n)\cap[\rho]$
  is $G_\delta$-dense in~$[\rho]$ for all~$n$.

  Let $\delta=\dom\rho$ and let $s\in\Fn(\deltaomegaone,2)$.
  We know that $K(h\restr n)\cap[\rho]\cap[s]\neq\emptyset$ for all~$n$.
  So for every~$n$ we can take $h_{s,n}\in\PP$ that extends~$h\restr n$
  and is such that $K_{h_{s,n}}\cap[\rho]\cap[s]\neq\emptyset$.
  Because $h_{s,n}(m)=h(m)$ if $n>m$ we can define $h_s\in\PP$
  by $h_s(m)=\max_nh_{s,n}(m)$.

  As $\bee>\aleph_1$ we can find $f\ge h$ such that $h_s\le^* f$ for all~$s$.
  We claim that $K_f\cap[\rho]\cap[s]\neq\emptyset$ for all~$s$, so
  that $[\rho]\subseteq K_f$
  (the closed set $K_f\cap[\rho]$ is dense in~$[\rho]$).
  
  To see this take an~$s$ and let $n$ be such that $f(m)\ge h_s(m)$
  for $m\ge n$.
  It follows that $f(m)\ge h(m)=h_{s,n}(m)$ for $m\le n$ and
  $f(m)\ge h(m)\ge h_{s,n}(m)$ for $m\ge n$.
  This implies that $K_f$ meets $[\rho]\cap[s]$.
\end{proof}

\begin{remark}\label{rem.BIG}
  The previous result is valid for all BIG sets:
  simply work inside $[\rho]$, where $\rho$~is as in the conclusion of
  Lemma~\ref{lemma.BIGisbig}.
\end{remark}

\begin{remark}\label{rem.BIGpreim}
  Lemma~\ref{lemma.KfBIG} generalises itself to the following situation:
  let $X$ be compact, let $\varphi:X\to\ourprod$ be continuous and
  onto, and let $\langle K_f:f\in\PP\rangle$ be an order-preserving cover
  of~$X$ by compact sets.
  Then there is an~$f$ such that $\varphi[K_f]$ is~BIG.

  One can go one step further: take a closed subset~$Y$ of~$X$ such that
  $\varphi[Y]$~is BIG and conclude that for some~$f\in\PP$
  the image $\varphi[Y\cap K_f]$ is BIG.
  Simply take $\rho$ such that $[\rho]\subseteq \varphi[Y]$ and work
  in the compact space $Y\cap\varphi\preim\bigl[[\rho]\bigr]$.
\end{remark}

\begin{remark}
  The reader may have pondered the need to consider three cases in the
  proof of Lemma~\ref{lemma.KfBIG}.
  The cases $\dee=\aleph_1$ and $\bee>\aleph_1$ lead to fairly
  straightforward arguments because each give one a definite handle on
  things, be it a cofinal set of size~$\aleph_1$ or the knowledge that
  \emph{all} $\aleph_1$-sized sets are bounded.
  The intermediate case, with just one unbounded set of size~$\aleph_1$,
  is saved by Todor\v{c}evi\'c's non-trivial translation of such a set into
  a subset of~$\ourprod$ that is already quite big.

  It would be interesting to see if Lemma~\ref{lemma.KfBIG}
  can be proved using just one argument.
\end{remark}

\section*{The main result}

Now we show that that a compact space with a $\PP$-diagonal
does not admit a continuous map onto~$[0,1]^{\omega_1}$
and deduce our main result.

\begin{theorem}\label{thm.Pdiag.onto}
  Assume $X$ is a compact space that maps onto~$\ourprod$.
  Then $X$ does not have a $\PP$-diagonal.
  \end{theorem}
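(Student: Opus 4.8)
The plan is to argue by contradiction: suppose $\varphi\colon X\to\ourprod$ is continuous and onto and that $X^2\setminus\Delta$ carries an order-preserving cover $\langle C_f:f\in\PP\rangle$ by compact sets. The strategy is to transfer this cover ``upstairs'' to $\ourprod\times\ourprod$ in a way that lets us apply Lemma~\ref{lemma.KfBIG} (in the form of Remark~\ref{rem.BIGpreim}), and then to exploit BIGness to find two distinct points of $X$ mapping to the same point of $\ourprod$ whose pair nonetheless lies in some $C_f$ --- an immediate contradiction, since such a pair would have to lie on $\Delta$ after projecting, yet the $C_f$ avoid $\Delta$ only in $X^2$, not in $(\ourprod)^2$. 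So I need to be a bit more careful: the real target is to produce a single point $z\in\ourprod$ for which the fibre $\varphi\preim(z)$ is a singleton \emph{and} such a forced-singleton situation contradicts that $\varphi$ is onto $\ourprod$ while $X$ is not metrizable. Let me restructure.

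First I would fix a metric-like fan of ``separating'' closed sets. For each pair $s,t\in\Fn(\omega_1,2)$ with $[s]\cap[t]=\emptyset$ in $\ourprod$, the set $L_{s,t}=\varphi\preim[[s]]\times\varphi\preim[[t]]$ is a compact subset of $X^2\setminus\Delta$ (because its two projections have disjoint $\varphi$-images), hence is covered by $\{C_f:f\in\PP\}$; by compactness and the order-preserving property a single $C_{f(s,t)}$ contains it. Since there are only $\aleph_1$-many relevant finite conditions and $\PP$ is $\le$-directed, after the three-case machinery is invoked correctly we want to capture them uniformly. The cleaner route: consider the compact set $Z=\{\orpr xy\in X^2:\varphi(x)\ne\varphi(y)\}$ --- no, that is not closed. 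Instead work on the subspace $X\times X$ and use that the diagonal of $\ourprod$ pulls back to a closed set $D_\varphi=\{\orpr xy:\varphi(x)=\varphi(y)\}\supseteq\Delta$, and $X^2\setminus D_\varphi$ is an open subset whose closed subsets are covered by the $C_f$'s. The heart of the argument is then to run Remark~\ref{rem.BIGpreim} with the map $\Phi=\varphi\times\varphi\colon X^2\to\ourprod\times\ourprod\cong\ourprod$ restricted appropriately, obtaining an $f$ such that $\Phi[C_f]$ is BIG, so by Lemma~\ref{lemma.BIGisbig} contains $[\rho]$ for a node $\rho$ of the tree over $\omega_1\sqcup\omega_1$; picking $\rho$ that forces the two coordinates to agree on a tail then exhibits a point $\orpr xy\in C_f$ with $\varphi(x)=\varphi(y)$, and a further refinement --- choosing $\rho$ to agree on \emph{all} coordinates past $\delta$ while $C_f\subseteq X^2\setminus\Delta$ --- is impossible unless every such fibre pair forced into $C_f$ actually has $x=y$, which happens only if $\varphi$ is finite-to-one on a $G_\delta$-dense set; pushing this through shows $\varphi$ is a homeomorphism on a dense $G_\delta$, forcing $X$ metrizable, and a metrizable compactum has a second-countable, hence $\PP$-dominated (indeed countably dominated) diagonal complement trivially --- wait, that is consistent, not a contradiction.

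Let me correct the endgame, which is where the genuine obstacle lies. The point is \emph{not} to derive metrizability of $X$ but a direct contradiction. Here is the clean version: take the open set $U=(\ourprod)^2\setminus\Delta_{\ourprod}$ and note $\Phi\preim[U]=X^2\setminus D_\varphi\subseteq X^2\setminus\Delta$, so $\langle C_f\cap\Phi\preim[U]:f\in\PP\rangle$ is an order-preserving cover of the \emph{open} subspace $\Phi\preim[U]$ of $X^2$ by relatively compact --- no, not compact. The fix is to stay inside genuinely compact pieces: for a basic clopen box $[s]\times[t]$ of $(\ourprod)^2$ with $[s]\cap[t]=\emptyset$, the preimage $\Phi\preim[[s]\times[t]]$ is compact and sits in $X^2\setminus\Delta$, so exactly one $C_{f}$ covers it, and $\Phi$ restricted to this compact set maps onto $[s]\times[t]$, which is homeomorphic to $\ourprod$. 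Applying Lemma~\ref{lemma.KfBIG} via Remark~\ref{rem.BIGpreim} inside each such box gives $f_{s,t}$ with $\Phi[C_{f_{s,t}}\cap\Phi\preim[[s]\times[t]]]$ BIG inside $[s]\times[t]$. Now run a transfinite construction: build an increasing $\omega_1$-chain of basic conditions whose union is a point $\orpr zz\in\Delta_{\ourprod}$ reached as a limit, at stage $\alpha$ using that the relevant $C_{f_\alpha}$'s are cofinal (in the $\dee=\aleph_1$ case) or applying the unbounded/bounded dichotomy; BIGness lets us always extend the $z$-coordinate freely past a countable ordinal while keeping a witnessing pair inside $C_{f_\alpha}$, and continuity pins the pair's limit on $\Delta$, so the limit pair $\orpr ww$ lies in $\bigcap$ of the $C_{f_\alpha}$'s --- but it is on the diagonal, contradicting $C_f\subseteq X^2\setminus\Delta$ once the $f_\alpha$ are chosen cofinal, exactly as in the first case of Lemma~\ref{lemma.KfBIG}. \textbf{The main obstacle} is orchestrating this limit construction so that the pair of witnesses genuinely converges to a diagonal point of $X^2$ (not merely has $\varphi$-images converging together); this requires simultaneously controlling both coordinates in $X$, which is where compactness of $X$ and a careful diagonalization over a cofinal/dominating family --- paralleling the three cases of Lemma~\ref{lemma.KfBIG} --- must be combined, and I expect the write-up to essentially reprove a relative version of Lemma~\ref{lemma.KfBIG} with $\Phi$ built in, rather than invoke it as a black box.
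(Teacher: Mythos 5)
There are two genuine gaps. The first is your repeated claim that a compact subset of $X^2\setminus\Delta$ (such as $\varphi\preim\bigl[[s]\bigr]\times\varphi\preim\bigl[[t]\bigr]$ for disjoint $[s]$ and $[t]$) must be contained in a \emph{single} $C_f$ ``by compactness and the order-preserving property''. This is false: the sets $C_f$ are compact, not open, so compactness of the covered set buys nothing. For instance $\{0\}\cup\{1/n:n\ge1\}$ is covered by the order-preserving compact family $C_f=\{0\}\cup\{1/n:n\le f(0)\}$ with no single member covering it. This is precisely why the paper needs Lemma~\ref{lemma.KfBIG} at all: the most one can extract from an order-preserving compact cover of (a preimage of) $\ourprod$ is one member with BIG image, which you do later invoke correctly via Remark~\ref{rem.BIGpreim}.

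The second gap is the endgame, which you yourself flag as ``the main obstacle'' but do not resolve. A node $\rho$ with $[\rho]\subseteq\Phi\bigl[C_f\cap\Phi\preim\bigl[[s]\times[t]\bigr]\bigr]$ lives in the tree over two copies of $\omega_1$ and already extends the incompatible conditions $s$ and $t$, so a point of $[\rho]$ can be made to have equal coordinates above $\dom\rho$ but never below it; this never produces $\orpr xy\in C_f$ with $\varphi(x)=\varphi(y)$. And the proposed $\omega_1$-length construction cannot conclude: the witnessing pair at stage $\alpha$ lies only in $C_{f_\alpha}$, and a cluster point of $\omega_1$-many such pairs lies in no single $C_f$ unless $\{f_\alpha:\alpha<\omega_1\}$ is bounded in $\PP$, which can fail outright (e.g.\ when $\bee=\aleph_1$); moreover nothing forces those witnesses to converge in $X^2$ to a point of $\Delta$ rather than merely having $\varphi$-images that come together. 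The paper's resolution is an $\omega$-length recursion with a fusion device: a preliminary shrinking $Y=\bigcap_nZ_n$ through BIG sets guarantees that at stage $n$ one can choose $y_n\in Y_n$, a BIG $Y_{n+1}\subseteq Y_n$, and $f_n$ \emph{extending a prescribed finite initial segment} $t_n$ with $\{y_n\}\times Y_{n+1}\subseteq K_{f_n}$. The $f_n$ then stabilize coordinatewise, $f=\sup_nf_n$ belongs to $\PP$, all pairs $\orpr{y_m}{y_n}$ with $m<n$ lie in the single compact set $K_f$, and a cluster point $y$ of $\langle y_n:n\in\omega\rangle$ (which lies in every $Y_{m+1}$) yields $\orpr yy\in K_f$, contradicting $K_f\cap\Delta=\emptyset$. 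That fusion step is the missing idea, and it is not a ``relative version of Lemma~\ref{lemma.KfBIG}'' so much as a separate pigeonhole argument guaranteeing that the required $f$'s can always be found above any given $t_n$.
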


\begin{proof}
  Let $\varphi:X\to\ourprod$ be continuous and onto.
  We use Remark~\ref{rem.BIGpreim} and say that a closed subset, $Y$, of~$X$
  is BIG if its image~$\varphi[Y]$ is.
  That is, $Y$~is BIG if there is a $\delta\in\omega_1$ such that
  $Y\cap\varphi\preim\bigl[[s]\bigr]\neq\emptyset$
  for all $s\in\Fn(\deltaomegaone,2)$.

  We observe the following: if $Y$ is BIG, as witnessed by~$\delta$, then
  for every $s\in\Fn(\deltaomegaone,2)$ the intersection
  $Y\cap\varphi\preim\bigl[[s]\bigr]$ is BIG as well; this will be witnessed
  by any~$\gamma$ that contains the domain of~$s$.

  In order to prove our theorem we assume that $X$ does have a $\PP$-diagonal,
  witnessed by $\langle K_f:F\in\PP\rangle$, and reach a contradiction.

  In order for the final recursion in the proof to succeed we need some
  preparation.
  Enumerate $\omega^{<\omega}$ in a one-to-one fashion as
  $\omegaseq{t}$, say in such a way that
  $t_m\subseteq t_n$ implies $m\le n$ (so that $t_0=\emptyseq$).
  We set $Z_0=X$ and given a BIG set~$Z_n$ we determine a BIG set~$Z_{n+1}$
  as follows.
    We check if there is a BIG subset $Z$ of~$Z_n$ with the property that
  for \emph{no} point~$z$ in~$Z$ are there a BIG subset~$Y$ of~$Z$
  and an~$f\in\PP$ with $t_n\subset f$
  such that $\{z\}\times Y\subseteq K_f$.
  If there is such a~$Z$ then every BIG subset of it also has this property
  so we can pick one that is a proper subset of~$Z_n$ and let it be $Z_{n+1}$;
  if there is no such~$Z$ then $Z_{n+1}=Z_n$.
  In the end we set $Y=\bigcap_nZ_n$.
  The set~$Y$ is BIG: for each~$n$ we have $\gamma_n\in\omega_1$ witnessing
  BIGness of~$Z_n$, then $\delta_0=\sup_n\gamma_n$ will witness BIGness of~$Y$.

  Pick $y_0\in Y$,
  take $i_0\in2$ distinct from $\varphi(y_0)(\delta_0)$,
  let $s_0=\{\orpr{\delta_0}{i_0}\}$, and
  set $Y_0=Y\cap\varphi\preim\bigl[[s_0]\bigr]$.
  By the observation above, $Y_0$~is BIG.
  Also: $\varphi(y_0)\notin\varphi[Y_0]$, so that
  $\{y_0\}\times Y_0$ is disjoint from the diagonal, $\Delta$,
  of~$X$.
  By Remark~\ref{rem.BIGpreim} we can find a BIG subset~$Y_1$ of~$Y_0$
  and $f_0\in\PP$ such
  that $\{y_0\}\times Y_1\subseteq K_{f_0}$.

  The point $y_0$ belongs to all~$Z_n$ and for any~$n$ such that
  $t_n\ge f_0$ (meaning that $t_n(i)\ge f_0(i)$ for $i\in\dom t_n$)
  it, the point~$y_0$, witnesses that $Z_{n+1}=Z_n$ in the following sense.
  The reason for having $Z_{n+1}$ be a proper subset of~$Z_n$ would be
  that for all $z\in Z$ and all BIG $Z'\subseteq Z$ and all $f\in\PP$
  with $t_n\subseteq f$ we would have $\{z\}\times Z'\not\subseteq K_f$.
  However, $y_0$ and $Y_1$ and $f_0$ show that this did not happen.

  The conclusion therefore is that for every such $t_n$ we know that
  every BIG $Z\subseteq Y$ does have an element~$z$ and a BIG subset~$Z'$
  such that $\{z\}\times Z'\subseteq K_f$ for some $f\in\PP$ that extends~$t_n$.

  This allows us to construct sequences $\omegaseq{y}$ (points in~$Y$),
  $\omegaseq{Y}$ (BIG subsets of~$Y$), and $\omegaseq{f}$ (in~$\PP$)
  such that
  \begin{enumerate}
  \item $y_n\in Y_n$, except for $n=0$,
  \item $Y_{n+1}\subseteq Y_n$,
  \item $\{y_n\}\times Y_{n+1}\subseteq K_{f_n}$,
  \item $f_{n+1}\ge f_n$ and $f_{n+1}\supseteq f_n\restr(n+1)$    
  \end{enumerate}
  As before we note that $f_m(n)=f_n(n)$ whenever $m\ge n$, so we can define
  a function~$f\in\PP$ by $f(n)=\max\{f_m(n):m\in\omega\}$.
  Note that $f\ge f_n$ for all~$n$ so that
  $$
  \{y_n\}\times Y_{n+1}\subseteq K_{f_n}\subseteq K_f
  $$
  for all~$n$.

  It follows that $\orpr{y_m}{y_n}\in K_f$ whenever $m<n$.
  This shows that $\orpr{y_m}y\in K_f$ whenever $m\in\omega$ and
  $y$~is a cluster point of $\omegaseq{y}$.
  But then $\orpr yy\in K_f$ for every cluster point~$y$ of $\omegaseq{y}$.
  However, $K_f$~was assumed to be disjoint from the diagonal of~$X$.
\end{proof}

We collect all previous results in the proof of our main theorem.

\begin{theorem}
Every compact space with a $\PP$-diagonal is metrizable.  
\end{theorem}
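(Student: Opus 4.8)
The plan is to assemble the pieces already proved, together with the known ZFC/CH facts about $\PP$-diagonals, so that all the combinatorics is pushed into the cardinal-arithmetic-free arguments of Lemma~\ref{lemma.KfBIG} and Theorem~\ref{thm.Pdiag.onto}. First I would recall the two external facts from~\cite{MR2739891} and~\cite{MR3338973}: a compact space with a $\PP$-diagonal and countable tightness is metrizable, and a compact space with a $\PP$-diagonal that has \emph{uncountable} tightness must map continuously onto the Tychonoff cube~$[0,1]^{\omega_1}$. (The latter is the hard content of~\cite{MR3338973}, but it is stated there in ZFC, not just under CH; it is only the \emph{metrizability} conclusion in~\cite{MR3338973} that needed CH, and that conclusion is exactly what we are now able to remove the hypothesis from.)

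Next I would connect Theorem~\ref{thm.Pdiag.onto} to this dichotomy. A continuous map of a compact space onto~$[0,1]^{\omega_1}$ can be composed with a continuous surjection $[0,1]^{\omega_1}\to 2^{\omega_1}=\ourprod$ (for instance, apply coordinatewise a fixed continuous surjection $[0,1]\to 2$, or use that $\ourprod$ is a continuous image of~$[0,1]^{\omega_1}$), so any compact space mapping onto~$[0,1]^{\omega_1}$ also maps onto~$\ourprod$. Hence Theorem~\ref{thm.Pdiag.onto} tells us that no compact space with a $\PP$-diagonal maps onto~$[0,1]^{\omega_1}$. Combining this with the dichotomy of the previous paragraph: a compact space with a $\PP$-diagonal cannot have uncountable tightness, so it has countable tightness, and therefore it is metrizable by the result of~\cite{MR2739891}. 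That is the whole proof.

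The step I expect to be the main obstacle is making the appeal to~\cite{MR3338973} clean: one must be sure that the statement ``uncountable tightness implies a continuous surjection onto~$[0,1]^{\omega_1}$'' is genuinely the ZFC part of that paper and does not secretly use CH, and one must phrase the reduction from~$[0,1]^{\omega_1}$ to~$\ourprod$ without sign errors. I would write the proof roughly as follows.

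\begin{proof}
  Let $X$ be a compact space with a $\PP$-diagonal. If $X$ has countable
  tightness then it is metrizable by~\cite{MR2739891}. So suppose, toward a
  contradiction, that $X$ has uncountable tightness. By the main result
  of~\cite{MR3338973}, $X$ then admits a continuous map onto the Tychonoff
  cube~$[0,1]^{\omega_1}$. Composing with a continuous surjection
  of~$[0,1]^{\omega_1}$ onto~$\ourprod$ (apply, coordinatewise, a continuous
  surjection of~$[0,1]$ onto~$2$) we obtain a continuous map of~$X$
  onto~$\ourprod$. But then, by Theorem~\ref{thm.Pdiag.onto}, $X$ does not
  have a $\PP$-diagonal, a contradiction. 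Hence $X$ has countable tightness
  and is therefore metrizable.
\end{proof}
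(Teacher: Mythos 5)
Your overall strategy is the same as the paper's: combine the tightness dichotomy from \cite{MR2739891} and \cite{MR3338973} with Theorem~\ref{thm.Pdiag.onto}. But the reduction from $[0,1]^{\omega_1}$ to $\ourprod$ is wrong as written. There is no continuous surjection of $[0,1]$ onto $2$, nor of $[0,1]^{\omega_1}$ onto $2^{\omega_1}$: the domain is connected and the range is disconnected (indeed zero-dimensional), and a continuous image of a connected space is connected. The surjections go the other way ($2^\omega$ onto $[0,1]$, hence $\ourprod$ onto $[0,1]^{\omega_1}$). So from a continuous surjection $\varphi\colon X\to[0,1]^{\omega_1}$ you cannot conclude that $X$ itself maps onto $\ourprod$, and Theorem~\ref{thm.Pdiag.onto} does not apply to $X$ directly.

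The repair is short and is exactly what the paper does: the preimage $Y=\varphi\preim\bigl[2^{\omega_1}\bigr]$ of the canonical copy of $\ourprod$ inside $[0,1]^{\omega_1}$ is a closed, hence compact, subspace of $X$ that \emph{does} map onto $\ourprod$; moreover a closed subspace of a space with a $\PP$-diagonal again has a $\PP$-diagonal (intersect the compact sets $K_f$ with $Y^2$). Applying Theorem~\ref{thm.Pdiag.onto} to $Y$ rather than to $X$ gives the contradiction. You should also record the small verification that the $\PP$-diagonal property is inherited by closed subspaces, since that is the step that licenses applying the theorem to $Y$.
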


\begin{proof}
  As noted in the introduction the authors of~\cite{MR3338973}
  proved that a non-metrizable compact space with a $\PP$-diagonal
  will map onto the Tychonoff cube~$[0,1]^{\omega_1}$ or, equivalently,
  that it has a closed subset that maps onto~$\ourprod$.

  However that closed subset would be a compact space with a $\PP$-diagonal
  that \emph{does} map onto~$\ourprod$.
  Theorem~\ref{thm.Pdiag.onto} says that this is impossible.
\end{proof}

\begin{bibdiv}
\begin{biblist}

\bib{MR895307}{article}{
   author={Cascales, B.},
   author={Orihuela, J.},
   title={On compactness in locally convex spaces},
   journal={Math. Z.},
   volume={195},
   date={1987},
   number={3},
   pages={365--381},
   issn={0025-5874},
   review={\MR{895307 (88i:46021)}},
   doi={10.1007/BF01161762},
}
  
\bib{MR2739891}{article}{
   author={Cascales, B.},
   author={Orihuela, J.},
   author={Tkachuk, V. V.},
   title={Domination by second countable spaces and Lindel\"of
   $\Sigma$-property},
   journal={Topology Appl.},
   volume={158},
   date={2011},
   number={2},
   pages={204--214},
   issn={0166-8641},
   review={\MR{2739891 (2011j:54018)}},
   doi={10.1016/j.topol.2010.10.014},
}

\bib{MR776622}{article}{
   author={van Douwen, Eric K.},
   title={The integers and topology},
   conference={
      title={Handbook of set-theoretic topology},
   },
   book={
      publisher={North-Holland, Amsterdam},
   },
   date={1984},
   pages={111--167},
   review={\MR{776622 (87f:54008)}},
}

\bib{MR3338973}{article}{
   author={Dow, Alan},
   author={Guerrero S{\'a}nchez, David},
   title={Domination conditions under which a compact space is metrisable},
   journal={Bull. Aust. Math. Soc.},
   volume={91},
   date={2015},
   number={3},
   pages={502--507},
   issn={0004-9727},
   review={\MR{3338973}},
   doi={10.1017/S0004972714001130},
}
\bib{MR2150789}{article}{
   author={Tkachuk, V. V.},
   title={A space $C_p(X)$ is dominated by irrationals if and only if it
   is $K$-analytic},
   journal={Acta Math. Hungar.},
   volume={107},
   date={2005},
   number={4},
   pages={253--265},
   issn={0236-5294},
   review={\MR{2150789 (2006e:54007)}},
   doi={10.1007/s10474-005-0194-y},
}

\bib{MR980949}{book}{
   author={Todor{\v{c}}evi{\'c}, Stevo},
   title={Partition problems in topology},
   series={Contemporary Mathematics},
   volume={84},
   publisher={American Mathematical Society, Providence, RI},
   date={1989},
   pages={xii+116},
   isbn={0-8218-5091-1},
   review={\MR{980949 (90d:04001)}},
   doi={10.1090/conm/084},
}

\end{biblist}
\end{bibdiv}

\end{document}